\newcommand{\noun}[1]{\textsc{#1}}
 \theoremstyle{plain}
\newtheorem{thm}{Theorem}[section]
  \theoremstyle{plain}
  \newtheorem{cor}[thm]{Corollary}
  \theoremstyle{plain}
  \newtheorem{prop}[thm]{Proposition}
  \theoremstyle{remark}
  \newtheorem*{acknowledgement*}{Acknowledgement}
  \theoremstyle{definition}
  \newtheorem{defn}[thm]{Definition}
  \theoremstyle{plain}
  \newtheorem{lem}[thm]{Lemma}
\subjclass[2000]{35Q53}
\begin{document}

\title{Power series solution of the modified KdV equation}

\author{Tu Nguyen}

\begin{abstract}
We use the method of Christ \cite{MR2333210} to prove local well-posedness
of a modified mKdV equation in $\mathcal{F}L^{s,p}$ spaces.
\end{abstract}
\maketitle

\section{Introduction}

The mKdV equation on the torus is \begin{equation}
\left\{ \begin{array}{cc}
\partial_{t}u+\partial_{x}^{3}u+u^{2}\partial_{x}u=0 & \mbox{ }\\
u(\cdot,0)=u_{0}\end{array}\right.\label{eq:mKdV}\end{equation}
where $u\in H^{s}(\mathbb{T})$ is a real-valued function of $(x,t)\in\mathbb{T}\times\mathbb{R}$.
If $u$ is a smooth solution of (\ref{eq:mKdV}) then $\left\Vert u(\cdot,t)\right\Vert _{L^{2}(\mathbb{T})}=\left\Vert u_{0}\right\Vert _{L^{2}(\mathbb{T})}$
for all $t$, therefore $\widetilde{u}(x,t)=u(x+\frac{1}{2\pi}\left\Vert u_{0}\right\Vert _{L^{2}(\mathbb{T})}^{2}t,t)$
is a solution of

\begin{equation}
\left\{ \begin{array}{cc}
\partial_{t}u+\partial_{x}^{3}u+\left(u^{2}-\frac{1}{2\pi}\int_{\mathbb{T}}u^{2}(x,t)dx\right)\partial_{x}u=0\\
u(\cdot,0)=u_{0}\end{array}\right.\label{eq:modified mKdV}\end{equation}
Thus, (\ref{eq:modified mKdV}) and (\ref{eq:mKdV}) are essentially
equivalent. Using Fourier restriction norm method, Bourgain \cite{MR1215780}
showed that (\ref{eq:modified mKdV}) is locally well-posed when $s\geq1/2$,
with uniformly continuous dependence on the initial data $u_{0}$.
In \cite{MR1466164}, he also showed that when $s<1/2$, the solution
map is not $C^{3}$. Takaoka and Tsutsumi \cite{MR2097834} proved
local-wellposedness of (\ref{eq:modified mKdV}) when $s>3/8$. For
(\ref{eq:mKdV}), Kappeler and Topalov \cite{MR2131061} used inverse
scattering method to show wellposedness when $s\geq0$ and Christ,
Colliander and Tao \cite{MR2018661} showed that uniformly continuous
dependence on the initial data does not hold when $s<1/2$. Thus,
there is a gap between known local well-posedness results and the
space $H^{-1/2}(\mathbb{T})$ suggested by the standard scaling argument.

Recently, Gr\"unrock and Vega \cite{MR0003} showed local well-posedness
of the mKdV equation on $\mathbb{R}$ with initial data in \[
\widehat{H_{s}^{r}}(\mathbb{R}):=\{f\in\mathcal{D}'(\mathbb{R}):\left\Vert f\right\Vert _{\widehat{H_{s}^{r}}}:=\left\Vert \left\langle \cdot\right\rangle ^{s}\hat{f}(\cdot)\right\Vert _{L^{r'}}<\infty\},\]
when $2\geq r>1$ and $s\geq\frac{1}{2}-\frac{1}{2r}$. (for $r>\frac{4}{3}$,
this was obtained by Gr\"unrock \cite{MR2096258}). This is an extension
of the result of Kenig, Ponce and Vega \cite{MR1211741} that local-wellposedness
holds in $H^{s}(\mathbb{R})$ when $s\geq1/4$. Furthermore, as $\widehat{H_{s}^{r}}$
scales like $H^{\sigma}$ with $\sigma=s+\frac{1}{2}-\frac{1}{r}$,
this result covers spaces that have scaling exponent $-\frac{1}{2}+$.

There is also a related recent work of Gr\"unrock and Herr \cite{MR0002}
on the derivative nonlinear Schr\"odinger equation on $\mathbb{T}$.
Both \cite{MR0003} and \cite{MR0002} used a version of Bourgain's
method.

In this paper, we will apply the new method of solution of Christ
\cite{MR2333210} to investigate the local well-posedness of (\ref{eq:modified mKdV})
with initial data in \[
\mathcal{F}L^{s,p}(\mathbb{T}):=\{f\in\mathcal{D}'(\mathbb{T}):\left\Vert f\right\Vert _{\mathcal{F}L^{s,p}}:=\left\Vert \left\langle \cdot\right\rangle ^{s}\hat{f}(\cdot)\right\Vert _{l^{p}}<\infty\}.\]
Let $B(0,R)$ be the ball of radius $R$ centered at $0$ in $\mathcal{F}L^{s,p}(\mathbb{T})$.
Our main result is the following.

\begin{thm}
Suppose $s\geq1/2$, $1\leq p\leq\infty$ and $p'(s+1/4)>1$. Let
$W$ be the solution map for smooth initial data of (\ref{eq:modified mKdV}).
Then for any $R>0$ there is $T>0$ such that, the solution map $W$
extends to a uniformly continuous map from $B(0,R)$ to $C([0,T],\mathcal{F}L^{s,p}(\mathbb{T}))$.
\end{thm}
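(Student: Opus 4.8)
The plan is to construct $W$ by Christ's power-series scheme rather than by a contraction argument. First I would remove the linear flow by passing to the interaction representation $w_{n}(t):=e^{-in^{3}t}\hat u(n,t)$. A direct computation turns \eqref{eq:modified mKdV} into the infinite system
\[
\partial_{t}w_{n}(t)=-\frac{in}{3}\sum_{n_{1}+n_{2}+n_{3}=n}e^{-i\omega t}\,w_{n_{1}}(t)w_{n_{2}}(t)w_{n_{3}}(t)\;+\;(\text{resonant terms}),
\]
where, by the elementary identity valid on the constraint $n=n_{1}+n_{2}+n_{3}$,
\[
\omega=\omega(n_{1},n_{2},n_{3})=n^{3}-n_{1}^{3}-n_{2}^{3}-n_{3}^{3}=3(n_{1}+n_{2})(n_{2}+n_{3})(n_{3}+n_{1}).
\]
The mean-subtraction already present in \eqref{eq:modified mKdV} is there precisely to cancel the principal resonant interaction, namely the slices in which one pair $n_{i}+n_{j}$ vanishes (so that $\omega=0$); after this cancellation only genuinely oscillatory interactions, together with harmless single-mode corrections, survive.

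Second, I would iterate the Duhamel formula for the system above, writing $w=\sum_{k\ge0}w^{[k]}$, where $w^{[k]}$ is the $(2k+1)$-linear contribution produced by $k$ nested time integrations. Each $w^{[k]}$ is indexed by a rooted ternary tree with $k$ trivalent nodes and is an explicit iterated oscillatory integral, a product over the nodes of factors $e^{-i\omega_{v}s_{v}}$ integrated in the internal time variables; the number of such trees grows only geometrically in $k$. The target estimate is
\[
\bigl\Vert w^{[k]}(t)\bigr\Vert_{\mathcal{F}L^{s,p}}\le\bigl(CT^{\delta}\bigr)^{k}R^{\,2k+1},\qquad 0\le t\le T,
\]
for some $\delta>0$ and $C=C(s,p)$; this forces absolute convergence of the series in $C([0,T],\mathcal{F}L^{s,p}(\mathbb{T}))$ as soon as $T=T(R)$ is chosen small enough that $CT^{\delta}R^{2}<1$.

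Third, and this is the crux, one must prove the single-step trilinear estimate that drives the induction. The mechanism is the elementary bound
\[
\Bigl|\int_{0}^{t}e^{-i\omega s}\,ds\Bigr|\le\min\bigl(T,\,2|\omega|^{-1}\bigr)\le T^{1-\theta}\bigl(2|\omega|^{-1}\bigr)^{\theta},\qquad\theta\in[0,1],
\]
which trades a positive power $T^{1-\theta}$ (the source of the decisive factor $T^{\delta}$ in each iteration) against the smoothing $|\omega|^{-\theta}\sim|(n_{1}+n_{2})(n_{2}+n_{3})(n_{3}+n_{1})|^{-\theta}$ needed to absorb the derivative $in$. After peeling off the $\ell^{p}$ norms of the three factors, the estimate reduces to the summability over the hyperplane $n_{1}+n_{2}+n_{3}=n$ of a weight of the form
\[
\frac{\langle n\rangle^{s}\,|n|\,|\omega|^{-\theta}}{\langle n_{1}\rangle^{s}\langle n_{2}\rangle^{s}\langle n_{3}\rangle^{s}},
\]
raised to the power $p'$ and summed in $n$. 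It is exactly here that the hypotheses $s\ge1/2$ and $p'(s+1/4)>1$ are consumed: they allow $\theta$ to be taken strictly below $1$, so that a genuine power of $T$ remains, while keeping $\theta$ large enough that the near-resonant contributions remain summable. I expect this frequency summation---carried out uniformly over the shape of the tree, with the geometric constant $C^{k}$ kept under control (the Catalan growth of the tree count being absorbed into $C$)---to be the main obstacle; the delicate regime is where one pair-sum $|n_{i}+n_{j}|$ is small, i.e.\ near-resonant, so that $|\omega|$ is small and the oscillatory gain is weakest.

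Finally, granted these bounds the series defines $u=W(u_{0})\in C([0,T],\mathcal{F}L^{s,p})$ for every $u_{0}\in B(0,R)$; for smooth $u_{0}$ it coincides with the classical solution, since both satisfy the Duhamel identity and the series converges, so the construction genuinely extends the smooth solution map $W$. For the continuity statement I would exploit multilinearity: the difference $w^{[k]}(u_{0})-w^{[k]}(v_{0})$ telescopes into $2k+1$ terms, each with a single factor replaced by $u_{0}-v_{0}$ and the remaining factors controlled by $R$; summing in $k$ yields a modulus-of-continuity bound in terms of $\Vert u_{0}-v_{0}\Vert_{\mathcal{F}L^{s,p}}$ that is uniform over $B(0,R)$, which gives the asserted uniformly continuous dependence on the data.
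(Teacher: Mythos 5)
Your plan is Christ's power-series method and is, in outline, exactly the route the paper takes: interaction representation, the factorization $\omega=3(n_{1}+n_{2})(n_{2}+n_{3})(n_{3}+n_{1})$, a tree-indexed multilinear expansion, a per-node trade of a power of $T$ against a power of $\langle\omega\rangle^{-1}$, and then an $\ell^{p'}$ summation of the resulting weight over the hyperplane $n_{1}+n_{2}+n_{3}=n$. Two points in your sketch, however, gloss over steps that genuinely require an argument. First, the bound $\bigl|\int_{0}^{t}e^{-i\omega s}\,ds\bigr|\le T^{1-\theta}(2|\omega|^{-1})^{\theta}$ does not iterate naively through the nested time integrals: once you integrate the innermost variable you obtain $\sigma^{-1}\bigl(e^{i\sigma s_{\tilde v}}-e^{i\sigma\max\{\cdots\}}\bigr)$, which is no longer a pure exponential in the remaining variables, so the next integration cannot again be bounded by $\min(T,|\omega|^{-1})$. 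The paper's Lemma 3.2 resolves this by splitting the non-terminal nodes into odd and even levels, integrating one parity exactly (gaining $\prod\langle\sigma(j,v)\rangle^{-1}$ over that parity) while bounding the other trivially by $t^{|E|}$, then swapping parities and taking the geometric mean of the two estimates to reach $(Ct)^{|T^{0}|/2}\prod_{v}\langle\sigma(j,v)\rangle^{-1/2}$, i.e.\ effectively $\theta=1/2$ at every node. You would need this (or an equivalent device) to make your induction close. Second, you attribute the hypothesis $s\ge1/2$ to the near-resonant frequency summation; in fact the non-resonant kernel only needs $s\ge1/4$ and $p'(s+1/4)>1$, and $s\ge1/2$ is consumed entirely by the surviving resonant single-mode term, whose kernel is $n/\langle n\rangle^{2s}$ on the diagonal $n_{1}=-n_{2}=n_{3}=n$ --- your "harmless single-mode corrections" are harmless only because of this hypothesis. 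Beyond these two points, the heart of the matter (the case analysis showing $\sup_{n}\Vert m\Vert_{\ell^{p'}_{i,j}}<\infty$, which occupies most of Section 3) is correctly identified as the main obstacle but not carried out, and your telescoping argument for uniform continuity and your limiting argument identifying $W$ with the smooth solution map are consistent with what the paper does.
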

We note that the $\mathcal{F}L^{s,p}(\mathbb{T})$ spaces that are
covered by Theorem 1.1 have scaling index $\frac{1}{4}+$. The restriction
$s\geq1/2$ is due to the presence of the derivative in the nonlinear
term, and is only used to bound the operator $S_{2}$ in section 3.
The same restriction on $s$ is also required in the work on the derivative
nonlinear Schr\"odinger equation on $\mathbb{T}$ by Gr\"unrock
and Herr \cite{MR0002}. We believe, however, that the range of $p$
in Theorem 1.1 is not sharp.

Concerning (\ref{eq:mKdV}), we have the following.

\begin{cor}
Suppose $s\geq1/2$, $1\leq p\leq\infty$ and $p'(s+1/4)>1$. Let
$\widetilde{W}$ be the solution map for smooth initial data of (\ref{eq:modified mKdV}).
Then for any $R>0$ there is $T>0$ such that for any $c>0$, the
solution map $\widetilde{W}$ extends to a uniformly continuous map
from $B(0,R)\cap\left\{ \varphi:\left\Vert \varphi\right\Vert _{L^{2}}=c\right\} \subset\mathcal{F}L^{s,p}(\mathbb{T})$
to $C([0,T],\mathcal{F}L^{s,p}(\mathbb{T}))$.
\end{cor}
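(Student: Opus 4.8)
The plan is to deduce the corollary directly from Theorem 1.1 by exploiting the explicit change of variables linking (\ref{eq:mKdV}) and (\ref{eq:modified mKdV}), so that essentially no new analysis is needed. Write $\widetilde W$ for the solution map of the mKdV equation (\ref{eq:mKdV}) and set $\Sigma_c:=B(0,R)\cap\{\varphi:\|\varphi\|_{L^2}=c\}$. First I would record the elementary fact that, for every $a\in\mathbb{R}$, the spatial translation $(\tau_a f)(x)=f(x-a)$ is an isometry of $\mathcal{F}L^{s,p}(\mathbb{T})$: on the Fourier side $\widehat{\tau_a f}(n)=e^{-ian}\hat f(n)$, so $|\widehat{\tau_a f}(n)|=|\hat f(n)|$ for all $n$ and the weighted $l^p$ norm is unchanged. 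I would also note the embedding $\mathcal{F}L^{s,p}(\mathbb{T})\hookrightarrow L^2(\mathbb{T})$, which holds throughout the hypothesis range (by H\"older, using $s\geq1/2$, and $s>3/4$ in the borderline case $p=\infty$); this makes $\|\cdot\|_{L^2}$ a continuous functional on $\mathcal{F}L^{s,p}$, so that $\Sigma_c$ is a well-defined closed subset.

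Next, for smooth data $\varphi$ with $\|\varphi\|_{L^2}=c$, I would make the change of variables precise. Since the $L^2$ norm is conserved by (\ref{eq:modified mKdV}) as well, if $v=W(\varphi)$ is the smooth solution of (\ref{eq:modified mKdV}) then $\|v(\cdot,t)\|_{L^2}=c$ for all $t$, and the computation in the introduction, run in reverse, shows that
\[
u(x,t):=v\big(x-\tfrac{c^2}{2\pi}t,\,t\big)=\big(\tau_{c^2t/2\pi}\,v(\cdot,t)\big)(x)
\]
is exactly the smooth solution of (\ref{eq:mKdV}) with data $\varphi$; that is, $\widetilde W(\varphi)(\cdot,t)=\tau_{c^2t/2\pi}\,W(\varphi)(\cdot,t)$. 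The crucial observation is that the translation speed $\tfrac{c^2}{2\pi}$ depends only on $c$, which is fixed on $\Sigma_c$. Hence on $\Sigma_c$ the map $\widetilde W$ is the composition of $W$ with one and the same \emph{data-independent} family of translations.

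The uniform continuity then transfers for free. For smooth $\varphi_1,\varphi_2\in\Sigma_c$, since the identical translation $\tau_{c^2t/2\pi}$ acts on both solutions and is an $\mathcal{F}L^{s,p}$-isometry, we get $\big\|\widetilde W(\varphi_1)(\cdot,t)-\widetilde W(\varphi_2)(\cdot,t)\big\|_{\mathcal{F}L^{s,p}}=\big\|W(\varphi_1)(\cdot,t)-W(\varphi_2)(\cdot,t)\big\|_{\mathcal{F}L^{s,p}}$, and taking the supremum over $t\in[0,T]$ yields $\|\widetilde W(\varphi_1)-\widetilde W(\varphi_2)\|_{C([0,T],\mathcal{F}L^{s,p})}=\|W(\varphi_1)-W(\varphi_2)\|_{C([0,T],\mathcal{F}L^{s,p})}$. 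Thus $\widetilde W$ inherits verbatim the modulus of continuity supplied by Theorem 1.1. Finally I would extend $\widetilde W$ from smooth data to all of $\Sigma_c$: smooth functions are dense in $\Sigma_c$ (approximate $\varphi$ in $\mathcal{F}L^{s,p}$ by smooth $\varphi_n$ and rescale to $\tfrac{c}{\|\varphi_n\|_{L^2}}\varphi_n$, which still converges since $\|\varphi_n\|_{L^2}\to c$ by the $L^2$ embedding), so a uniformly continuous map on this dense subset extends uniquely to $\Sigma_c$; completeness of $C([0,T],\mathcal{F}L^{s,p})$ guarantees that the extension takes values there, which in particular gives time-continuity of $\widetilde W(\varphi)$ for rough $\varphi$ with no separate argument. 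The step I expect to be most delicate is not an estimate but the bookkeeping: confirming that fixing $\|\varphi\|_{L^2}=c$ genuinely forces a constant translation speed (so that the isometry cancels \emph{exactly}) and arranging the smooth approximation to remain on the sphere $\Sigma_c$ via the rescaling trick. Once these are in place, the corollary is an immediate consequence of Theorem 1.1 and the translation-invariance of the $\mathcal{F}L^{s,p}$ norm.
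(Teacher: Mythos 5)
Your argument is correct and is exactly the (implicit) argument the paper intends: the paper states the corollary without proof, relying on the change of variables $\widetilde u(x,t)=u(x+\tfrac{1}{2\pi}\|u_0\|_{L^2}^2 t,t)$ from the introduction, the fact that the translation speed is constant on the sphere $\|\varphi\|_{L^2}=c$, and that translations are isometries of $\mathcal{F}L^{s,p}$. Your additional bookkeeping (the embedding $\mathcal{F}L^{s,p}\hookrightarrow L^2$ under the stated hypotheses, and the rescaling trick to keep smooth approximants on the sphere) is a correct and welcome filling-in of details the paper omits.
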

As in \cite{MR2333210}, the solution map $W$ obtained in Theorem
1.1 gives a weak solution of (\ref{eq:modified mKdV}) in the following
sense. Let $T_{N}$ be defined by $T_{N}u=\left(\chi_{[-N,N]}\widehat{u}\right)^{\vee}$.
Let $\mathcal{N}u:=\left(u^{2}-\frac{1}{2\pi}\int_{\mathbb{T}}u^{2}(x,t)dx\right)\partial_{x}u$
be the limit in $C([0,T],\mathcal{D}'(\mathbb{T}))$ of $\mathcal{N}(T_{N}u)$
as $N\rightarrow\infty$, provided it exists. 

\begin{prop}
Let $s$ and $p$ be as in Theorem 1.1. Let $\varphi\in\mathcal{F}L^{s,p}$
and $u:=W\varphi\in C([0,T],\mathcal{F}L^{s,p})$. Then $\mathcal{N}u$
exists and $u$ satisfies (\ref{eq:modified mKdV}) in the sense of
distribution in $(0,T)\times\mathbb{T}$.
\end{prop}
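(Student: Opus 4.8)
The plan is to exhibit $u=W\varphi$ as a limit of genuine smooth solutions and to reduce every limit to the continuity of one polynomial map. The key algebraic point is that for smooth $w$ the nonlinearity is in conservative form,
\[
\mathcal{N}(w)=\Big(w^{2}-\tfrac{1}{2\pi}\!\int_{\mathbb{T}}w^{2}\,dx\Big)\partial_{x}w=\partial_{x}\Big(\tfrac{1}{3}w^{3}-\tfrac{1}{2\pi}\Big(\int_{\mathbb{T}}w^{2}\,dx\Big)w\Big),
\]
because the spatial mean $\tfrac{1}{2\pi}\int_{\mathbb{T}}w^{2}\,dx$ does not depend on $x$. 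Accordingly I define, for $w\in\mathcal{F}L^{s,p}$, the map $P(w):=\partial_{x}\big(\tfrac{1}{3}w^{3}-\tfrac{1}{2\pi}(\int_{\mathbb{T}}w^{2}\,dx)w\big)$ and will show that $P$ is well defined and continuous from $\mathcal{F}L^{s,p}$ into $\mathcal{D}'(\mathbb{T})$; this one statement governs both passages to the limit.

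First I would prove the cubic estimate: for $w\in\mathcal{F}L^{s,p}$ the series $\widehat{w^{3}}(k)=\sum_{k_{1}+k_{2}+k_{3}=k}\hat{w}(k_{1})\hat{w}(k_{2})\hat{w}(k_{3})$ converges absolutely for each $k$ and $\|w^{3}\|_{\mathcal{F}L^{\sigma,q}}\lesssim\|w\|_{\mathcal{F}L^{s,p}}^{3}$ for a suitable pair $(\sigma,q)$ with $\mathcal{F}L^{\sigma,q}\hookrightarrow\mathcal{D}'(\mathbb{T})$. This is a trilinear estimate of the same flavour as, but softer than, the nonlinear estimates of Section 3, and it is here that $s\geq1/2$ supplies the summability. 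Being trilinear, $w\mapsto w^{3}$ is then locally Lipschitz, hence continuous. Combined with the elementary embedding $\mathcal{F}L^{s,p}\hookrightarrow L^{2}(\mathbb{T})$ for $s\geq1/2$, which makes $w\mapsto\tfrac{1}{2\pi}\int_{\mathbb{T}}w^{2}\,dx$ continuous, this yields the continuity of $P$. Since $T_{N}w$ is a trigonometric polynomial, $\mathcal{N}(T_{N}w)=P(T_{N}w)$ holds classically, and because $T_{N}u(t)\to u(t)$ in $\mathcal{F}L^{s,p}$ uniformly on $[0,T]$---the range $\{u(t):t\in[0,T]\}$ is compact, so the uniformly bounded operators $T_{N}$ converge to the identity uniformly on it---the continuity of $P$ gives $\mathcal{N}(T_{N}u)\to P(u)$ in $C([0,T],\mathcal{D}'(\mathbb{T}))$. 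Hence $\mathcal{N}u$ exists and equals $P(u)$.

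To prove that $u$ solves the equation, I would approximate the data by trigonometric polynomials $\varphi_{n}\to\varphi$ in $\mathcal{F}L^{s,p}$, staying in a fixed ball, so that Theorem 1.1 gives the genuine smooth solutions $u_{n}=W\varphi_{n}\to u$ in $C([0,T],\mathcal{F}L^{s,p})$. Each $u_{n}$ satisfies $\partial_{t}u_{n}+\partial_{x}^{3}u_{n}+\mathcal{N}(u_{n})=0$ classically, with $\mathcal{N}(u_{n})=P(u_{n})$. Testing against $\psi\in C_{c}^{\infty}((0,T)\times\mathbb{T})$ and integrating by parts, the linear terms pass to the limit because $u_{n}\to u$ in $C([0,T],\mathcal{D}'(\mathbb{T}))$ and $\partial_{t},\partial_{x}^{3}$ act continuously on $\mathcal{D}'$, while the nonlinear term passes to the limit because $P(u_{n})\to P(u)=\mathcal{N}u$ by the continuity of $P$. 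This yields $\partial_{t}u+\partial_{x}^{3}u+\mathcal{N}u=0$ in $\mathcal{D}'((0,T)\times\mathbb{T})$.

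The main obstacle is the cubic estimate and, above all, its uniformity in $t$: the convergence $\mathcal{N}(T_{N}u)(t)\to P(u)(t)$ must hold uniformly in $t$ to land in $C([0,T],\mathcal{D}'(\mathbb{T}))$, which is what the Lipschitz bound together with the compactness of the range of $u$ provides, once the trilinear bound is in hand. Establishing that bound across the full range $1\leq p\leq\infty$ is where the real work lies. The endpoint $p=\infty$ needs a separate touch: there trigonometric polynomials are not dense and $T_{N}$ does not converge in norm, so I would run both the truncation step and the data approximation at a slightly lower regularity $(s',q)$ with $q<\infty$---admissible because the hypotheses force $s>3/4$ when $p=\infty$---noting that $u=W\varphi$ is unchanged. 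Everything else is soft and relies only on the continuity supplied by Theorem 1.1.
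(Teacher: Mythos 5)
The paper does not actually give a proof of this proposition: it is dismissed in one line as ``basically the same as that of Proposition 1.4 in \cite{MR2333210}'', so there is no argument of the paper's own to compare against. Your proposal is correct and is, in substance, the natural adaptation of Christ's argument to this setting, so it fills the omission rather than diverging from it. The one genuinely mKdV-specific point --- pulling the derivative out via the conservative form $\mathcal{N}(w)=\partial_{x}\bigl(\tfrac{1}{3}w^{3}-\tfrac{1}{2\pi}(\int_{\mathbb{T}}w^{2})w\bigr)$ so that only a derivative-free cubic estimate is needed --- is the right move, and it is already implicit in the paper's own symmetrization $\sum n_{3}\prod\hat{u}(n_{i})=\tfrac{n}{3}\sum\prod\hat{u}(n_{i})$ in equation (3). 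Your ``soft'' trilinear bound does go through on the whole admissible range: the hypothesis $p'(s+\tfrac{1}{4})>1$ is equivalent to $s+\tfrac{1}{p}>\tfrac{3}{4}$, which places $|\hat{w}|$ in some $l^{r}$ with $\tfrac{1}{r}>\tfrac{2}{3}$, and Young's inequality then makes the triple convolution $\widehat{w^{3}}$ absolutely convergent and lands it in some $l^{u}$ with $u<\infty$; the embedding $\mathcal{F}L^{s,p}\hookrightarrow L^{2}$ likewise needs $s>\tfrac{1}{2}-\tfrac{1}{p}$, which these hypotheses also supply. You were also right to flag the $p=\infty$ endpoint, where $T_{N}$ does not converge in norm and trigonometric polynomials are not dense; your fix of descending to an admissible $\mathcal{F}L^{s',q}$ with $q<\infty$ (possible since $p=\infty$ forces $s>3/4$) is what is needed, together with the observation from the proof of Theorem 1.1 that $W$ of band-limited data is a genuine classical solution on a time interval controlled by the $\mathcal{F}L^{s,p}$ norm alone. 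I see no gap.
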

To prove these results, we will formally expand the solution map into
a sum of multilinear operators. These multilinear operators are described
in the section 2. Then we will show that if $u(\cdot,0)\in\mathcal{F}L^{s,p}$
then the sum of these operators converges in $\mathcal{F}L^{s,p}$
for small time $t$, when $s$ and $p$ satisfy the conditions of
Theorem 1.1. Furthermore, this gives a weak solution of (\ref{eq:modified mKdV}),
justifying our formal derivation.

\begin{acknowledgement*}
I would like to thank my advisor Carlos Kenig for suggesting the topic
and helpful conversations. I would also like to thank Axel Gr\"unrock
and Sebastian Herr for valuable comments and suggestions.
\end{acknowledgement*}

\section{Multilinear operators}

We rewrite (\ref{eq:modified mKdV}) as a system of ordinary differential
equations of the spatial Fourier series of $u$ (see formula (1.9)
of \cite{MR2097834}, and also Lemma 8.16 of \cite{MR1215780} ):

\begin{eqnarray}
\frac{d\hat{u}(n,t)}{dt}-in^{3}\hat{u}(n,t) & = & -i\sum_{n_{1}+n_{2}+n_{3}=n}\hat{u}(n_{1},t)\hat{u}(n_{2},t)n_{3}\hat{u}(n_{3},t)\nonumber \\
 &  & +i\sum_{n_{1}}\hat{u}(n_{1},t)\hat{u}(-n_{1},t)n\hat{u}(n,t)\label{eq:first}\\
 & = & \frac{-in}{3}\sum_{n_{1}+n_{2}+n_{3}=n}^{*}\hat{u}(n_{1},t)\hat{u}(n_{2},t)\hat{u}(n_{3},t)\nonumber \\
 &  & +in\hat{u}(n,t)\hat{u}(-n,t)\hat{u}(n,t),\nonumber \end{eqnarray}
where the star means the sum is taken over the triples satisfying
$n_{j}\ne n$, $j=1,2,3$.

Let $a(n,t)=e^{in^{3}t}\hat{u}(n,t)$, then $a_{n}(t)$ satisfy \[
\frac{da(n,t)}{dt}=-\frac{in}{3}\sum_{n_{1}+n_{2}+n_{3}=n}^{*}e^{i\sigma(n_{1},n_{2},n_{3})t}a(n_{1},t)a(n_{2},t)a(n_{3},t)+ina(n,t)a(-n,t)a(n,t),\]
where \[
\sigma(n_{1},n_{2},n_{3})=(n_{1}+n_{2}+n_{3})^{3}-n_{1}^{3}-n_{2}^{3}-n_{3}^{3}=3(n_{1}+n_{2})(n_{2}+n_{3})(n_{3}+n_{1}).\]
Or, in integral form, \begin{eqnarray}
a(n,t) & = & a(n,0)-\frac{in}{3}\int_{0}^{t}\sum_{n_{1}+n_{2}+n_{3}=n}^{*}e^{i\sigma(n_{1},n_{2},n_{3})s}a(n_{1},s)a(n_{2},s)a(n_{3},s)ds\label{eq: a integral sum}\\
 &  & +in\int_{0}^{t}\left|a(n,s)\right|^{2}a(n,s)ds.\nonumber \end{eqnarray}
We note that the triples in the sum are precisely those with $\sigma(n_{1},n_{2},n_{3})\ne0$.
If, $a$ is sufficiently nice, say $a\in C([0,T],l^{1})$ (which is
the case if $u\in C([0,T],H^{s}(\mathbb{T}))$ for large $s$) then
we can exchange the order of the integration and summation to obtain\begin{eqnarray}
a(n,t) & = & a(n,0)-\frac{in}{3}\sum_{n_{1}+n_{2}+n_{3}=n}^{*}\int_{0}^{t}e^{i\sigma(n_{1},n_{2},n_{3})s}a(n_{1},s)a(n_{2},s)a(n_{3},s)ds\label{eq: a sum integral}\\
 &  & +in\int_{0}^{t}\left|a(n,s)\right|^{2}a(n,s)ds.\nonumber \end{eqnarray}
Replacing the $a(n_{j},s)$ in the right hand side by their equations
obtained using (\ref{eq: a sum integral}), we get \begin{eqnarray}
a(n,t) & = & a(n,0)-\frac{in}{3}\sum_{n_{1}+n_{2}+n_{3}=n}^{*}a(n_{1},0)a(n_{2},0)a(n_{3},0)\int_{0}^{t}e^{i\sigma(n_{1},n_{2},n_{3})s}ds\nonumber \\
 &  & +in\left|a(n,0)\right|^{2}a(n,0)\int_{0}^{t}ds+\textrm{ additional terms }\nonumber \\
 & = & a(n,0)-\frac{n}{3}\sum_{n_{1}+n_{2}+n_{3}=n}^{*}\frac{a(n_{1},0)a(n_{2},0)a(n_{3},0)}{\sigma(n_{1},n_{2},n_{3})}(e^{i\sigma(n_{1},n_{2},n_{3})t}-1)\nonumber \\
 &  & +int\left|a(n,0)\right|^{2}a(n,0)+\textrm{ additional terms }\label{eq:a sum integral first step}\end{eqnarray}
The additional terms are those which depends not only on $a(m,0)$.
An example of the additional terms is\begin{eqnarray*}
-\frac{nn_{3}}{9}\sum_{n_{1}+n_{2}+n_{3}=n}^{*}a(n_{1},0)a(n_{2},0)\sum_{m_{1}+m_{2}+m_{3}=n_{3}}^{*}\int_{0}^{t}e^{i\sigma(n_{1},n_{2},n_{3})s}\int_{0}^{s}e^{i\sigma(m_{1},m_{2},m_{3})s'}\times\\
a(m_{1},s')a(m_{2},s')a(m_{3},s')ds'ds\end{eqnarray*}
We refer to section 2 of \cite{MR2333210} for more detailed description
of these additional terms. Then we can again use (\ref{eq: a sum integral})
for each appearance of $a(m,\cdot)$ in the additional terms, and
obtain more complicated terms. Continuing this process indefinitely,
we get a formal expansion of $a(n,t)$ as a sum of multilinear operators
of $a(m,0)$. 

We will now describes these operators and then show that their sum
converges. Again, we refer to section 3 of \cite{MR2333210} for more
detailed explanations. Each of our multilinear operators will be associated
to a tree, which has the property that each of its node has either
zero or three children. We will only consider trees with this property.
If a node $v$ of $T$ has three children, they will be denoted by
$v_{1},v_{2},v_{3}$. We denote by $T^{0}$ the set of non-terminal
nodes of $T$, and $T^{\infty}$ the set of terminal nodes of $T$.
Clearly, if $\left|T\right|=3k+1$ then $\left|T^{0}\right|=k$ and
$\left|T^{\infty}\right|=2k+1$. 

\begin{defn}
Let $T$ be a tree. Then $\mathcal{J}(T)$ is the set of $j\in\mathbb{Z}^{T}$
such that if $v\in T^{0}$ then \[
j_{v}=j_{v_{1}}+j_{v_{2}}+j_{v_{3}},\]
and either $j_{v_{i}}\ne j_{v}$ for all $i$, or $j_{v_{1}}=-j_{v_{2}}=j_{v_{3}}=j_{v}$.

We will denote by $v(T)$ be the root of $T$ and $j(T)=j(v(T))$.
For $j\in\mathcal{J}(T)$ and $v\in T^{0}$,\[
\sigma(j,v):=\sigma(j(v_{1}),j(v_{2}),j(v_{3})).\]

\begin{defn}
$\mathcal{R}(T,t)=\{s\in\mathbb{R}_{+}^{T^{0}}:\textrm{ if }v<w\textrm{ then }0\leq s_{v}\leq s_{w}\leq t\}$. 
\end{defn}
\end{defn}
Using these definitions, we can rewrite (\ref{eq:a sum integral first step})
as

\begin{eqnarray*}
a(n,t) & = & a(n,0)+\sum_{\left|T\right|=4}\omega_{T}\sum_{j\in\mathcal{J}(T),j(T)=n}na(j(v_{1}),0)a(j(v_{2}),0)a(j(v_{3}),0)\int_{\mathcal{R}(T,t)}c(j,v,s)ds\\
 &  & +\mbox{additional terms}\end{eqnarray*}
here $c(j,v,s)=e^{i\sigma(j,v)s}$, and $\omega_{T}$ is a constant
with $\left|\omega_{T}\right|\leq1$. 

Continuing the replacement process will lead to

\begin{eqnarray*}
a(n,t) & = & a(n,0)+\sum_{\left|T\right|<3k+1}\omega_{T}\sum_{j\in\mathcal{J}(T),j(T)=n}\prod_{u\in T^{0}}j_{u}\prod_{v\in T^{\infty}}a(j_{v},0)\int_{\mathcal{R}(T,t)}c(j,s)ds\\
 &  & +\mbox{additional terms}\end{eqnarray*}
where \[
c(j,s)=\prod_{v\in T^{0}}c(j,v,s)\]

We will show that the series \[
a(n,0)+\sum_{T}\omega_{T}\sum_{j\in\mathcal{J}(T),j(T)=n}\prod_{u\in T^{0}}j_{u}\prod_{v\in T^{\infty}}a(j_{v},0)\int_{\mathcal{R}(T,t)}c(j,s)ds\]
converges in $l^{p}$ to a weak solution of (\ref{eq:modified mKdV}).

\section{$l^{p}$ convergence}

\begin{defn}
For a tree $T$, $j\in\mathcal{J}(T)$, let\[
I_{T}(t,j)=\int_{\mathcal{R}(T,t)}c(j,s)ds,\]
 and \[
S_{T}(t)(a_{v})_{v\in T^{\infty}}(n)=\omega_{T}\sum_{j\in\mathcal{J}(T),j(T)=n}\prod_{u\in T^{0}}j_{u}\prod_{v\in T^{\infty}}a_{v}(j_{v})I_{T}(t,j).\]

\end{defn}
We first give an estimate for $I_{T}(t,j)$ which allows us to bound
$S_{T}$.

\begin{lem}
For $0\leq t\leq1$, $\left|I_{T}(j,t)\right|\leq(Ct)^{\left|T^{0}\right|/2}\prod_{v\in T^{0}}\left\langle \sigma(j,v)\right\rangle ^{-1/2}.$
\end{lem}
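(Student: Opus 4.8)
The plan is to reduce the multidimensional oscillatory integral $I_T(t,j)$ to a sequence of one-variable integrations carried out from the leaves of $T^0$ up to the root, and to prove the estimate by induction on $|T^0|$. The engine of the whole argument is the following elementary one-dimensional bound, valid for every real $\sigma$, every exponent $m\ge 0$, and every $0\le t\le 1$:
\[
\left|\int_{0}^{t}s^{m}e^{i\sigma s}\,ds\right|\le C_{0}\,t^{m+1/2}\langle\sigma\rangle^{-1/2},
\]
with $C_0$ an absolute constant independent of $m$. I would prove this by comparing two bounds: the trivial one $\int_0^t s^m\,ds = t^{m+1}/(m+1)$, and the oscillatory one obtained from a single integration by parts, $\left|\int_0^t s^m e^{i\sigma s}\,ds\right|\le 2t^m/|\sigma|$. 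Taking the geometric mean $\min(A,B)\le\sqrt{AB}$ produces the factor $t^{m+1/2}|\sigma|^{-1/2}$ with a constant $\le\sqrt2$, and replacing $|\sigma|^{-1/2}$ by $\langle\sigma\rangle^{-1/2}$ costs only an absolute factor once we use $t\le 1$ to absorb the case $|\sigma|\le1$ into the trivial bound. The crucial point, which I would emphasize, is that the $1/(m+1)$ appearing in the trivial bound keeps $C_0$ bounded uniformly in $m$.

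Next I would exploit the tree structure to factor the domain $\mathcal{R}(T,t)$. Writing $r$ for the root of $T$, the only constraint coupling a descendant non-terminal node $v$ to the rest of the tree is $s_v\le s_r$, and variables lying in subtrees hanging off distinct children of $r$ are mutually unconstrained. Hence, denoting by $T^{(1)},\dots$ the subtrees rooted at the non-terminal children of $r$,
\[
I_T(t,j)=\int_0^t e^{i\sigma(j,r)s_r}\prod_{i}I_{T^{(i)}}(s_r,j)\,ds_r.
\]
This is the conceptual heart of the reduction: it turns the integral over the nested simplex into a single root integration whose integrand is the product of subtree integrals evaluated with upper limit $s_r$. (Here I read $\mathcal{R}(T,t)$ as the nested simplex in which the root variable runs over $[0,t]$ and each node's variable is bounded above by that of its nearest non-terminal ancestor.)

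The induction then runs as follows. Assuming the claimed estimate for each $T^{(i)}$ with $t$ replaced by $s_r$, the product $\prod_i I_{T^{(i)}}(s_r,j)$ is bounded by $C^{|T^0|-1}s_r^{(|T^0|-1)/2}\prod_{v\ne r}\langle\sigma(j,v)\rangle^{-1/2}$, since $\sum_i |(T^{(i)})^0|=|T^0|-1$. Substituting this into the root integral and applying the one-dimensional lemma with $m=(|T^0|-1)/2$ yields exactly the extra factor $t^{|T^0|/2}\langle\sigma(j,r)\rangle^{-1/2}$, closing the induction with the advertised form $(Ct)^{|T^0|/2}\prod_{v\in T^0}\langle\sigma(j,v)\rangle^{-1/2}$ after relabeling the constant; the base cases $|T^0|=0,1$ are checked directly.

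I expect the main obstacle to be neither the factorization nor the routine integration by parts, but keeping the constant genuinely of the form $C^{|T^0|}$, i.e. geometric in the number of nodes rather than factorial. A naive iteration that first estimates each inner integral by $\min(s_r,|\sigma|^{-1})$ and only afterwards tries to restore the powers of $t$ loses the root oscillation and produces $m$-dependent constants that could compound badly. The device that avoids this is precisely the monomial lemma above with its $m$-uniform constant: because the accumulated weight $s_r^{(|T^0|-1)/2}$ is reabsorbed at each level without a growing penalty, the constants multiply to $C^{|T^0|}$ as required. I would also note that $\langle\sigma(j,v)\rangle\ge 1$ makes each such factor harmless, so no loss is incurred in passing the subtree bounds through the root integration.
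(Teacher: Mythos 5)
There is a genuine gap in your inductive step. Your factorization
\[
I_{T}(t,j)=\int_{0}^{t}e^{i\sigma(j,r)s_{r}}\prod_{i}I_{T^{(i)}}(s_{r},j)\,ds_{r}
\]
is correct, and so is your monomial lemma, but the two do not connect. The inductive hypothesis only controls the \emph{absolute value} of $F(s_{r}):=\prod_{i}I_{T^{(i)}}(s_{r},j)$, namely $|F(s_{r})|\le C^{|T^{0}|-1}s_{r}^{(|T^{0}|-1)/2}\prod_{v\ne r}\langle\sigma(j,v)\rangle^{-1/2}$, whereas the monomial lemma requires the integrand to literally be $s^{m}e^{i\sigma s}$. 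An upper bound $|F(s)|\le Cs^{m}$ is not enough to extract the factor $\langle\sigma(j,r)\rangle^{-1/2}$ from the root integration: take $F(s)=s^{m}e^{-i\sigma(j,r)s}$, which satisfies $|F(s)|=s^{m}$ and yet $\int_{0}^{t}e^{i\sigma(j,r)s}F(s)\,ds=t^{m+1}/(m+1)$, with no decay in $\sigma(j,r)$ at all. To exploit the root oscillation you would need to integrate by parts, hence to control $F'$; but differentiating $I_{T^{(i)}}(s,j)$ in its upper limit returns $e^{i\sigma(j,r_{i})s}\prod_{l}I_{T^{(i,l)}}(s,j)$, which has lost the factor $\langle\sigma(j,r_{i})\rangle^{-1/2}$ attached to the child $r_{i}$ of the root, so the induction does not close in the form you set up. This is exactly the trap you warn against in your final paragraph; the $m$-uniform constant in the monomial lemma does not get you out of it.

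The paper's proof sidesteps the problem without induction. It partitions $T^{0}$ into the nodes of odd level $O$ and of even level $E$. Since no two nodes of the same parity class are adjacent in the tree, fixing the $E$-variables decouples all the $O$-variables: each $s_{v}$ with $v\in O$ is then integrated over an interval against the \emph{pure} exponential $e^{i\sigma(j,v)s_{v}}$, yielding a full factor $\langle\sigma(j,v)\rangle^{-1}$ for every $v\in O$, after which the remaining integration over the $E$-variables is estimated trivially by the measure $t^{|E|}$. This gives $|I_{T}(j,t)|\le 2^{|T^{0}|}t^{|E|}\prod_{v\in O}\langle\sigma(j,v)\rangle^{-1}$; exchanging the roles of $O$ and $E$ gives the symmetric bound, and the geometric mean of the two is exactly the claimed estimate. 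If you want to keep a top-down recursive structure, you would have to strengthen the induction hypothesis to carry a bound on the $t$-derivative of $I_{T}$ as well as on $I_{T}$ itself (or, equivalently, to propagate two one-sided estimates and interpolate only at the end), which essentially reconstructs the paper's argument.
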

\begin{proof}
Let $v_{0}$ be the root of $T$. For $v\in T^{0}$, define the level
of $v$, denoted $l(v)$, to be the length of the unique path connecting
$v_{0}$ and $v$. Let $O$ be the set of $v\in T^{0}$ for which
$l(v)$ is odd, and $E$ those $v$ for which $l(v)$ is even.

First we fix the variables $s_{v}$ with $v\in E$, and take the integration
in the variables $s_{v}$ with $v\in O$. For each $v\in O$, the
result of the integration is\[
\frac{1}{\sigma(j,v)}\left(e^{i\sigma(j,v)s_{\tilde{v}}}-e^{i\sigma(j,v)\max\{s_{v(1)},s_{v(2)},s_{v(3)}\}}\right)\]
if $\sigma(j,v)\ne0$, and \[
s_{\tilde{v}}-\max\{s_{v(1)},s_{v(2)},s_{v(3)}\}.\]
 if $\sigma(j,v)=0$. Here $\widetilde{v}$ is the parent of $v$.
Thus, we obtain the factor\[
\prod_{v\in O}\left\langle \sigma(j,v\right\rangle ^{-1}\]
and an integral in $s_{v}$, $v\in E$ where the integrand is bounded
by $2^{\left|O\right|}$. As the domain of integration in $s_{v}$
with $v\in E$ has measure less than $t^{\left|E\right|}$, we see
that \[
\left|I_{T}(j,t)\right|\leq2^{\left|T^{0}\right|}t^{\left|E\right|}\prod_{v\in O}\left\langle \sigma(j,v)\right\rangle ^{-1}.\]
By switching the role of $O$ and $E$, we get \[
\left|I_{T}(j,t)\right|\leq2^{\left|T^{0}\right|}t^{\left|O\right|}\prod_{v\in E}\left\langle \sigma(j,v)\right\rangle ^{-1}.\]
Combining these two estimates, we obtain the lemma.
\end{proof}
By the previous lemma, \[
\left|S_{T}(t)(a_{v})_{v\in T^{\infty}}(n)\right|\leq(Ct)^{\left|T^{0}\right|/2}\sum_{j\in\mathcal{J}(T):j(T)=n}\prod_{u\in T^{0}}\left\langle \sigma(j,u)\right\rangle ^{-1/2}\left|j_{u}\right|\prod_{v\in T^{\infty}}\left|a_{v}(j_{v})\right|.\]
Let \[
\widetilde{S}_{T}(a_{v})_{v\in T^{\infty}}(n)=\sum_{j\in\mathcal{J}(T):j(T)=n}\prod_{u\in T^{0}}\left\langle \sigma(j,u)\right\rangle ^{-1/2}\left|j_{u}\right|\prod_{v\in T^{\infty}}\left|a_{v}(j_{v})\right|,\]
and \[
\widetilde{S}(a_{1},a_{2},a_{3})(n)=\sum_{n_{1}+n_{2}+n_{3}=n}^{*}\left|n\right|\left\langle \sigma(n_{1},n_{2},n_{3})\right\rangle ^{-1/2}\prod_{i=1}^{3}\left|a_{i}(n_{i})\right|+n\left|\prod a_{i}(n)\right|.\]
It is clear that \[
\widetilde{S}_{T}(a_{v})_{v\in T^{\infty}}=\widetilde{S}(\widetilde{S}_{T_{1}}(a_{v})_{v\in T_{1}^{\infty}},\widetilde{S}_{T_{2}}(a_{v})_{v\in T_{2}^{\infty}},\widetilde{S}_{T_{3}}(a_{v})_{v\in T_{3}^{\infty}}).\]
where $T_{i}$ is the subtree of $T$ that contains all nodes $u$
such that $u\leq v(T)_{i}$ (recall that $v(T)$ is the root of $T$).
Hence, to bound $S_{T}$, it suffices to bound $\widetilde{S}$. For
this purpose, we will use the following simple lemma.

\begin{lem}
Let $S$ be the multilinear operator defined by \[
S(a_{1},a_{2},a_{3})(n)=\sum_{n_{1}+n_{2}+n_{3}=n}m(n_{1},n_{2},n_{3})\prod_{j=1}^{3}a_{j}(n_{j}),\]
Let $1\leq p\leq\infty$. Then for any pair of indices $i\ne j\in\{1,2,3\}$,
\[
\left\Vert S(a_{1},a_{2},a_{3})\right\Vert _{l^{p}}\leq\sup_{n}\left\Vert m(n_{1},n_{2},n_{3})\right\Vert _{l_{i,j}^{p'}}\prod_{k=1}^{3}\left\Vert a_{k}\right\Vert _{l^{p}}.\]

\end{lem}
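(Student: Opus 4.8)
The plan is to reduce the estimate to a single application of Hölder's inequality in the two \emph{free} summation variables $n_i,n_j$, followed by Tonelli's theorem to recover the full $l^p$ norm of the remaining factor. Since both the expression for $S$ and the constraint $n_1+n_2+n_3=n$ are symmetric under permuting the roles of the three arguments, it suffices to treat the pair $(i,j)=(1,2)$: relabelling the $a_k$ and the $n_k$ reduces the general case to this one. With this choice the third index is determined, $n_3=n-n_1-n_2$, and I abbreviate $M:=\sup_n\|m(n_1,n_2,n-n_1-n_2)\|_{l^{p'}_{1,2}}$.

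First suppose $1\le p<\infty$. For each fixed $n$ I would apply Hölder in the variables $(n_1,n_2)$ with conjugate exponents $p'$ and $p$:
\[
|S(a_1,a_2,a_3)(n)|\le\Big(\sum_{n_1,n_2}|m(n_1,n_2,n-n_1-n_2)|^{p'}\Big)^{1/p'}\Big(\sum_{n_1,n_2}|a_1(n_1)|^p|a_2(n_2)|^p|a_3(n-n_1-n_2)|^p\Big)^{1/p}.
\]
The first factor is at most $M$, so raising to the $p$-th power and summing over $n$ gives
\[
\|S(a_1,a_2,a_3)\|_{l^p}^p\le M^p\sum_n\sum_{n_1,n_2}|a_1(n_1)|^p|a_2(n_2)|^p|a_3(n-n_1-n_2)|^p.
\]

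The key step is then to interchange the summations and sum over $n$ first. Because every term is nonnegative, Tonelli's theorem justifies the interchange, and for fixed $(n_1,n_2)$ the map $n\mapsto n-n_1-n_2$ is a bijection of $\mathbb Z$, so $\sum_n|a_3(n-n_1-n_2)|^p=\|a_3\|_{l^p}^p$. The surviving double sum factors as $\|a_1\|_{l^p}^p\|a_2\|_{l^p}^p$, and extracting the $p$-th root yields the asserted inequality. The only point requiring separate attention is the endpoint $p=\infty$ (so $p'=1$), where Hölder degenerates into the trivial pointwise bound $|S(a_1,a_2,a_3)(n)|\le\|a_1\|_{l^\infty}\|a_2\|_{l^\infty}\|a_3\|_{l^\infty}\sum_{n_1,n_2}|m(n_1,n_2,n-n_1-n_2)|\le M\prod_k\|a_k\|_{l^\infty}$; taking the supremum over $n$ closes this case. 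There is no genuine obstacle in the argument—it is a clean Hölder/Tonelli computation—so the only things to watch are the bookkeeping of which variable is recovered by the final sum over $n$ and the consistent handling of the $p=\infty$ endpoint.
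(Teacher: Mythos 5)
Your proof is correct and follows essentially the same route as the paper: a single application of H\"older's inequality in the two free summation indices for each fixed $n$, followed by taking the $l^p$ norm in $n$ and using Tonelli to factor the remaining triple sum into $\prod_k\|a_k\|_{l^p}^p$. You simply spell out the Tonelli/reindexing step and the $p=\infty$ endpoint that the paper leaves implicit.
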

\begin{proof}
By Holder inequality, for any $n$,

\[
\left|S(a_{1},a_{2},a_{3})(n)\right|\leq\left\Vert m(n_{1},n_{2},n_{3})\right\Vert _{l_{i,j}^{p'}}\left\Vert \prod_{k=1}^{3}a_{k}\right\Vert _{l_{i,j}^{p}}\leq\sup_{n}\left\Vert m(n_{1},n_{2},n_{3})\right\Vert _{l_{i,j}^{p'}}\left\Vert \prod_{k=1}^{3}a_{k}\right\Vert _{l_{i,j}^{p}}\]
 Taking $l^{p}$-norm in $n$ we obtain the lemma.
\end{proof}
To show that $\widetilde{S}$ is a bounded multilinear map on $l^{s,p}:=\{a:\left\langle \cdot\right\rangle ^{s}a\in l^{p}\}$,
we will show the boundedness of $S$ on $l^{p}$ where $S$ has kernel
\[
m(n_{1},n_{2},n_{3})=\frac{\left\langle n\right\rangle ^{s}\left|n\right|}{\left\langle \sigma(n_{1},n_{2},n_{3})\right\rangle ^{1/2}\prod_{k=1}^{3}\left\langle n_{k}\right\rangle ^{s}}\,\,\,\,\mbox{where }n=n_{1}+n_{2}+n_{3}.\]
We split $S$ into sum of two operators $S_{1}$ and $S_{2}$ where
$S_{1}$ has convolution kernel \[
m_{1}(n_{1},n_{2},n_{3})=\frac{\left\langle n\right\rangle ^{s}\left|n\right|}{\prod_{k=1}^{3}\left\langle n_{k}\right\rangle ^{s}\left\langle n-n_{k}\right\rangle ^{1/2}}\,\,\,\,\mbox{if }n=n_{1}+n_{2}+n_{3},\,\,\, n_{i}\ne n\]
and $S_{2}$ has kernel \[
m_{2}(n_{1},n_{2},n_{3})=n/\left\langle n\right\rangle ^{2s}\,\,\,\,\mbox{if }n_{1}=-n_{2}=n_{3}=n.\]
Clearly, for $S_{2}$ to be bounded, we need $s\geq1/2$. It remains
to bound $S_{1}$, for which we have the following.

\begin{prop}
$S_{1}$ is bounded from $l^{p}\times l^{p}\times l^{p}$ to $l^{p}$
when $s\geq1/4$ and $p'(s+\frac{1}{4})>1$.
\end{prop}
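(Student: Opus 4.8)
The plan is to reduce everything, via Lemma 3.3, to a single uniform-in-$n$ summability statement for $m_1$, and then to the convergence of one explicit $n$-independent double series whose two convergence conditions match the two hypotheses. First I would apply Lemma 3.3. Since the constraint $n_1+n_2+n_3=n$ is symmetric and $m_1$ is a symmetric function of $(n_1,n_2,n_3)$, the norm $\|m_1\|_{l^{p'}_{i,j}}$ is the same for every pair $i\ne j$ and equals the $p'$-th root of
\[
\Sigma(n):=\sum_{n_1+n_2+n_3=n}m_1(n_1,n_2,n_3)^{p'}.
\]
So it suffices to prove $\sup_n\Sigma(n)<\infty$.

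The key structural point is that the derivative factor and the weight in the numerator are controlled by the largest input frequency: since $|n|\le|n_1|+|n_2|+|n_3|\le 3\max_k|n_k|$, writing $n_{*}$ for a frequency of largest modulus we have $\langle n\rangle^{s}|n|\lesssim\langle n_{*}\rangle^{s+1}$. By symmetry I split the summation region according to which of $|n_1|,|n_2|,|n_3|$ is largest; it is enough to treat the region where $|n_3|$ is largest, relabelling the other two pieces. There $\langle n_3\rangle^{s}$ absorbs one factor $\langle n_{*}\rangle^{s}$ from $\prod_k\langle n_k\rangle^{s}$, and --- provided the two smaller frequencies do not nearly cancel $n_3$ --- the resonance factors $\langle n-n_1\rangle^{1/2}\langle n-n_2\rangle^{1/2}=\langle n_2+n_3\rangle^{1/2}\langle n_1+n_3\rangle^{1/2}$ are both comparable to $\langle n_3\rangle^{1/2}$ and absorb the remaining $\langle n_3\rangle$. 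Using $n-n_3=n_1+n_2$, what survives is the model kernel
\[
m_1(n_1,n_2,n_3)\lesssim\frac{1}{\langle n_1\rangle^{s}\langle n_2\rangle^{s}\langle n_1+n_2\rangle^{1/2}},
\]
so, after enlarging the domain to all of $\mathbb{Z}^2$, $\Sigma(n)$ is dominated by the $n$-independent series
\[
\Sigma_0:=\sum_{n_1,n_2\in\mathbb{Z}}\frac{1}{\langle n_1\rangle^{a}\langle n_2\rangle^{a}\langle n_1+n_2\rangle^{b}},\qquad a=sp',\ b=\tfrac{p'}{2}.
\]

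I then evaluate $\Sigma_0$ by summing first in $n_1$ for fixed $w=n_1+n_2$ and then in $w$, so that $\Sigma_0=\sum_{w}\langle w\rangle^{-b}g(w)$ with $g(w)=\sum_{n_1}\langle n_1\rangle^{-a}\langle w-n_1\rangle^{-a}$. The inner convolution is finite for every $w$ (its worst case $w=0$ being $\sum_{n_1}\langle n_1\rangle^{-2a}$) exactly when $2a>1$, i.e.\ $sp'>1/2$; and then $g(w)\lesssim\langle w\rangle^{-a}$ for $a>1$ and $g(w)\lesssim\langle w\rangle^{1-2a}$ for $a<1$ (up to logarithmic endpoints). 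Feeding this into the outer sum, $\Sigma_0<\infty$ precisely when $2a+b>2$, i.e.\ $p'(s+\tfrac{1}{4})>1$. Thus $\Sigma_0$ converges under the two conditions $sp'>1/2$ and $p'(s+\tfrac{1}{4})>1$, and I would finish by checking the hypotheses imply both: the second is assumed; for the first, if $p'\ge 2$ then $sp'\ge p'/4\ge 1/2$ (strictness holding because $p'(s+\tfrac{1}{4})>1$ excludes the corner $s=1/4,\,p'=2$), while if $p'<2$ then $p'(s+\tfrac{1}{4})>1$ forces $s+\tfrac14>1/p'>1/2$, hence $sp'>1/2$ directly.

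The main obstacle is the resonant part of the region where $|n_3|$ is largest but one of the two smaller frequencies nearly cancels $n_3$ (equivalently, where the two largest frequencies nearly cancel). There the reduction to the model kernel fails, since a factor $\langle n_i+n_3\rangle^{1/2}$ is no longer comparable to $\langle n_3\rangle^{1/2}$ and the derivative factor $|n|$ is not obviously absorbed, so this set must be estimated directly: bounding the sum over the large frequency by a power of $\langle n\rangle$ and controlling the residual convolution in the remaining variable. That estimate reproduces exactly the same two conditions, and it is here, in the regime $p'>2$, that the hypothesis $s\ge 1/4$ (equivalently $sp'>1/2$) is genuinely used --- this resonant set being essentially the same one responsible for the constraint in the bound for $S_2$.
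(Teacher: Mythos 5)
Your reduction of the non\-resonant region is correct and, for that part, cleaner than the paper's argument: the paper does not organize by the largest input frequency but instead splits into four cases according to the sizes of the $|n_i|$ relative to $n$, while you compress the non-resonant part into a single $n$-independent model series whose convergence conditions $sp'>1/2$ and $p'(s+\tfrac14)>1$ you verify follow from the hypotheses. The application of Lemma 3.3 and the observation that $\left\Vert m\right\Vert _{l_{i,j}^{p'}}$ does not depend on the choice of pair $(i,j)$ are also fine.

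The gap is that the resonant region --- where the two largest frequencies nearly cancel --- is never actually estimated; you only assert that a direct estimate ``reproduces exactly the same two conditions.'' That region is where essentially all of the work in the paper's proof lives (its Cases 1, 2 and 4, including the partial-fraction inequality $\left|\frac{1}{n_{3}(n-n_{2})}\right|\leq\frac{1}{\left|n_{1}\right|}\left(\frac{1}{\left|n_{3}\right|}+\frac{1}{\left|n-n_{2}\right|}\right)$ and the quantities $A_{n}=\sum_{0<j<5n}j^{-p'/2}$ and $B_{n}=\sum_{0<j<5n}j^{-p's}$). Moreover, the one-line strategy you propose (sum over the large frequency, then control a residual one-variable convolution) does not describe the worst sub-region: when $n_{1}\approx n$, $n_{2}\approx n$, $n_{3}\approx-n$, all three frequencies are comparable to $n$ and \emph{two} of the resonance factors, $\left\langle n-n_{1}\right\rangle $ and $\left\langle n-n_{2}\right\rangle $, are simultaneously small and vary independently; the sum there produces $\left\langle n\right\rangle ^{(1/2-2s)p'}A_{n}^{2}$, and checking that this is bounded is precisely where $s\geq1/4$ (when $p'\geq2$) and $p'(s+\tfrac14)>1$ (when $p'<2$) are both needed. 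This is a genuinely two-dimensional resonance, not a large-frequency sum against a convolution, and the analogous computations for the region $\left|n_{1}\right|,\left|n_{2}\right|\geq2n$ with $n_{1}\approx-n_{2}$ must also be carried out. Until these estimates are written down, the proposition is not proved.
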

\begin{proof}
In the proof, all the sums are taken over the triples $(n_{1},n_{2},n_{3})$
that satisfy the additional property that $n_{i}\ne n$, for all $1\leq i\leq3$.
Clearly, we can assume $n>0$. Note that if say $\left|n_{1}\right|\geq5n$
then as $\left|n_{2}+n_{3}\right|=\left|n-n_{1}\right|\geq4n$, at
least one of $n_{2}$ and $n_{3}$ has absolute value bigger than
$2n$. Also, we cannot have $\left|n_{i}\right|\leq n/4$ for all
$i$. Thus, up to permutation, there are four cases.
\begin{enumerate}
\item $\left|n_{1}\right|,\left|n_{2}\right|,\left|n_{3}\right|\in[n/4,5n]$
\item $\left|n_{1}\right|,\left|n_{2}\right|\in[n/4,5n]$, $\left|n_{3}\right|\leq n/4$
\item $\left|n_{1}\right|\in[n/4,5n]$, $\left|n_{2}\right|,\left|n_{3}\right|\leq n/4$
\item $\left|n_{1}\right|,\left|n_{2}\right|\geq2n$
\end{enumerate}
By the previous lemma, it suffices to show that in each of these four
regions, for some $i\ne j$ the $l_{i,j}^{p'}$-norm of $m$ is bounded.

\textbf{Case 1.} As $3n=\sum(n-n_{i})$ for some index $i$, say $i=3$,
we must have $\left|n-n_{3}\right|\sim n$. Since we also have $\left|n_{1}\right|,\left|n_{2}\right|\gtrsim n$,
\[
\left|m(n_{1},n_{2},n_{3})\right|\lesssim\frac{\left\langle n\right\rangle ^{1/2-s}}{\left\langle n_{3}\right\rangle ^{s}\left|(n-n_{1})(n-n_{2})\right|^{1/2}}.\]
We will use the following inequality \[
\left|\frac{1}{n_{3}(n-n_{2})}\right|=\left|\frac{1}{n_{1}}\left(\frac{1}{n_{3}}-\frac{1}{n-n_{2}}\right)\right|\leq\frac{1}{\left|n_{1}\right|}\left(\frac{1}{\left|n_{3}\right|}+\frac{1}{\left|n-n_{2}\right|}\right).\]

\begin{enumerate}
\item If $1/4\leq s\leq1/2$: then $\left\langle n_{3}\right\rangle ^{p'(1/2-s)}\lesssim\left\langle n\right\rangle ^{p'(1/2-s)}$,
so \begin{eqnarray*}
\left\Vert m\right\Vert _{l_{1,2}^{p'}}^{p'} & \lesssim & \sum_{\left|n_{1}\right|\leq5n}\frac{\left\langle n\right\rangle ^{p'(1/2-s)}}{\left|n-n_{1}\right|^{p'/2}}\sum_{\left|n_{2}\right|\leq5n}\frac{\left\langle n_{3}\right\rangle ^{p'(1/2-s)}}{\left(\left\langle n_{3}\right\rangle \left|n-n_{2}\right|\right)^{p'/2}}\\
 & \lesssim & \sum_{\left|n_{1}\right|\leq5n}\frac{\left\langle n\right\rangle ^{p'(1/2-s)}}{\left|n-n_{1}\right|^{p'/2}}\sum_{\left|n_{2}\right|\leq5n}\frac{\left\langle n\right\rangle ^{p'(1/2-s)}}{\left|n_{1}\right|^{p'/2}}\left(\frac{1}{\left|n-n_{2}\right|^{p'/2}}+\frac{1}{\left|n-n_{1}-n_{2}\right|^{p'/2}}\right)\\
 & \lesssim & \sum_{\left|n_{1}\right|\leq5n}\frac{\left\langle n\right\rangle ^{p'(1-2s)}A_{n}}{\left|(n-n_{1})n_{1}\right|^{p'/2}}\\
 & \lesssim & \left\langle n\right\rangle ^{p'(1-2s)}A_{n}\sum_{\left|n_{1}\right|\leq5n}\left(\frac{1}{n}(\frac{1}{\left|n-n_{1}\right|}+\frac{1}{\left|n_{1}\right|})\right)^{p'/2}\\
 & \lesssim & \left\langle n\right\rangle ^{p'(1/2-2s)}A_{n}^{2}.\end{eqnarray*}
where $\sum_{0<j<5n}j^{-p'/2}=A_{n}.$ As \[
A_{n}\lesssim\left\{ \begin{array}{cc}
n^{1-p'/2} & \mbox{ if }p'<2\\
\log\left\langle n\right\rangle  & \mbox{ if }p'=2\\
1 & \mbox{ if }p'>2\end{array}\right.\]
we easily check that $\left\langle n\right\rangle ^{(1/2-2s)p'}A_{n}^{2}$
is bounded by a constant, under our hypothesis on $s$ and $p'$.
\item If $s>1/2$: then $\left\langle n-n_{2}\right\rangle ^{p'(s-1/2)}\lesssim\left\langle n\right\rangle ^{p'(s-1/2)}$,
so \begin{eqnarray*}
\left\Vert m\right\Vert _{l_{1,2}^{p'}}^{p'} & \lesssim & \sum_{\left|n_{1}\right|\leq5n}\frac{\left\langle n\right\rangle ^{p'(1/2-s)}}{\left|n-n_{1}\right|^{p'/2}}\sum_{\left|n_{2}\right|\leq5n}\frac{\left\langle n-n_{2}\right\rangle ^{p'(s-1/2)}}{\left(\left\langle n_{3}\right\rangle \left|n-n_{2}\right|\right)^{p's}}\\
 & \lesssim & \sum_{\left|n_{1}\right|\leq5n}\frac{\left\langle n\right\rangle ^{p'(1/2-s)}}{\left|n-n_{1}\right|^{p'/2}}\sum_{\left|n_{2}\right|\leq5n}\frac{\left\langle n\right\rangle ^{p'(s-1/2)}}{\left|n_{1}\right|^{p's}}\left(\frac{1}{\left|n-n_{2}\right|^{p's}}+\frac{1}{\left|n-n_{1}-n_{2}\right|^{p's}}\right)\\
 & \lesssim & \sum_{\left|n_{1}\right|\leq5n}\frac{B_{n}}{\left|n-n_{1}\right|^{p'/2}\left|n_{1}\right|^{p's}}\\
 & \lesssim & B_{n}\sum_{\left|n_{1}\right|\leq5n}\left|n-n_{1}\right|^{p'(s-1/2)}\left(\frac{1}{n}(\frac{1}{\left|n-n_{1}\right|}+\frac{1}{\left|n_{1}\right|})\right)^{p's}\\
 & \lesssim & \left\langle n\right\rangle ^{-p'/2}B_{n}^{2}.\end{eqnarray*}
where $B_{n}=\sum_{0<j<5n}j^{-p's}.$ As \[
B_{n}\lesssim\left\{ \begin{array}{cc}
n^{1-p's} & \mbox{ if }p's<1\\
\log\left\langle n\right\rangle  & \mbox{ if }p's=1\\
1 & \mbox{ if }p's>1\end{array}\right.\]
 we easily check that $\left\langle n\right\rangle ^{-p'/2}B_{n}^{2}$
is bounded by a constant, under our hypothesis on $s$ and $p'$.
\end{enumerate}
~

\textbf{Case 2} This case can be treated in exactly the same way as
the first case, except when $n_{3}=0$. In the region $n_{3}=0$,
\begin{eqnarray*}
\left\Vert m\right\Vert _{l_{1,3}^{p'}}^{p'} & \lesssim & \sum_{n_{1}}\frac{\left\langle n\right\rangle ^{p'(1/2-s)}}{\left|n_{1}(n-n_{1})\right|^{p'/2}}\leq\sum_{n_{1}}\left\langle n\right\rangle ^{-p's}\left(\frac{1}{\left|n_{1}\right|^{p'/2}}+\frac{1}{\left|n-n_{1}\right|^{p'/2}}\right)\\
 & \lesssim & \left\langle n\right\rangle ^{-p's}A_{n}\lesssim1\end{eqnarray*}

\textbf{Case 3} As $\left|n_{1}\right|,\left|n-n_{2}\right|,\left|n-n_{3}\right|\sim n$,
\[
\left|m(n_{1},n_{2},n_{3})\right|\lesssim\frac{1}{\left\langle n_{2}\right\rangle ^{s}\left\langle n_{3}\right\rangle ^{s}\left|n_{2}+n_{3}\right|^{1/2}}.\]

Without loss of generality, we can suppose $\left|n_{3}\right|\geq\left|n_{2}\right|$
\begin{enumerate}
\item If $\left|n_{2}\right|<\left|n_{3}\right|/2$:\begin{eqnarray*}
\left\Vert m\right\Vert _{l_{2,3}^{p'}}^{p'} & \lesssim & \sum_{0\leq\left|n_{2}\right|\leq n/4}\frac{1}{\left\langle n_{2}\right\rangle ^{p's}}\sum_{n/4\geq\left|n_{3}\right|>2n_{2}}\frac{1}{\left\langle n_{3}\right\rangle ^{p'(s+1/2)}}\\
 & \lesssim & \sum_{0\leq\left|n_{2}\right|\leq n/4}\frac{1}{\left\langle n_{2}\right\rangle ^{p'(2s+1/2)-1}}\\
 & \lesssim & 1\end{eqnarray*}
if $(s+1/4)p'>1$.
\item If $\left|n_{2}\right|\geq\left|n_{3}\right|/2$:\begin{eqnarray*}
\left\Vert m\right\Vert _{l_{2,3}^{p'}}^{p'} & \lesssim & \sum_{\left|n_{3}\right|\leq n/4}\frac{1}{\left\langle n_{3}\right\rangle ^{2p's}}\sum_{\left|n_{3}\right|\geq n_{2}\geq\left|n_{3}\right|/2}\frac{1}{\left\langle n_{3}+n_{2}\right\rangle ^{p'/2}}\\
 & \lesssim & \sum_{\left|n_{3}\right|\leq n/4}\frac{1}{\left\langle n_{3}\right\rangle ^{2p's}}\max\{\log\left\langle n_{3}\right\rangle ,\left\langle n_{3}\right\rangle ^{-p'/2+1}\}\\
 & \lesssim & \sum_{\left|n_{3}\right|\leq n/4}\frac{\log\left\langle n_{3}\right\rangle }{\left\langle n_{3}\right\rangle ^{2p's}}+\sum_{\left|n_{3}\right|\leq n/4}\frac{1}{\left\langle n_{3}\right\rangle ^{p'(2s+1/2)-1}}\lesssim1\end{eqnarray*}
as $2p's\geq p'(s+1/4)>1$.
\end{enumerate}
\textbf{Case 4} $\left|n_{1}\right|,\left|n_{2}\right|>2n$: Note
that in this case, $\left|n_{1}\right|\sim\left|n-n_{1}\right|$ and
$\left|n_{2}\right|\sim\left|n-n_{3}\right|$.
\begin{enumerate}
\item If $\left|n_{3}\right|,\left|n-n_{3}\right|\geq n/2:$ we have \[
\left|m(n_{1},n_{2},n_{3})\right|\lesssim\frac{\left\langle n\right\rangle ^{1/2}}{\left\langle n_{1}\right\rangle ^{s+1/2}\left\langle n_{2}\right\rangle ^{s+1/2}},\]
hence\begin{eqnarray*}
\left\Vert m\right\Vert _{l_{1,2}^{p'}}^{p'} & \lesssim & \left\langle n\right\rangle ^{p'/2}\sum_{\left|n_{1}\right|,\left|n_{2}\right|>2n}\frac{1}{\left\langle n_{1}\right\rangle ^{p'(s+1/2)}\left\langle n_{2}\right\rangle ^{p'(s+1/2)}}\\
 & \lesssim & \frac{\left\langle n\right\rangle ^{p'/2}}{\left\langle 2n\right\rangle ^{p'(2s+1)-2}}\lesssim1.\end{eqnarray*}

\item If $\left|n_{3}\right|<n/2$: then $\left|n_{1}\right|\sim\left|n_{2}\right|$
and $\left|n-n_{3}\right|\geq n/2$, so \[
\left|m(n_{1},n_{2},n_{3})\right|\lesssim\frac{n^{s+1/2}}{\left\langle n_{1}\right\rangle ^{2s+1}\left\langle n_{3}\right\rangle ^{s}},\]
hence\[
\left\Vert m\right\Vert _{l_{1,3}^{p'}}^{p'}\lesssim B_{n}\sum_{\left|n_{1}\right|>2n}\frac{n^{p'(s+1/2)}}{\left\langle n_{1}\right\rangle ^{p'(2s+1)}}\lesssim\frac{B_{n}}{n^{p'(s+1/2)-1}}\lesssim1\]

\item If $\left|n-n_{3}\right|<n/2$: then $\left|n_{1}\right|\sim\left|n_{2}\right|$
and $\left|n_{3}\right|\sim n$. Hence, \[
\left|m(n_{1},n_{2},n_{3})\right|\lesssim\frac{n}{\left\langle n_{1}\right\rangle ^{2s+1}\left\langle n-n_{3}\right\rangle ^{1/2}}.\]
Therefore, \begin{eqnarray*}
\left\Vert m\right\Vert _{l_{1,3}^{p'}}^{p'} & \lesssim & \sum_{\left|n_{1}\right|\geq2n}\sum_{n/2<n_{3}<3n/2}\frac{n^{p'}}{\left\langle n_{1}\right\rangle ^{p'(2s+1)}\left\langle n-n_{3}\right\rangle ^{p'/2}}\\
 & \lesssim & \sum_{\left|n_{1}\right|\geq2n}\frac{A_{n}n^{p'}}{\left\langle n_{1}\right\rangle ^{p'(2s+1)}}\lesssim\frac{A_{n}}{n^{2p's-1}}\lesssim1\end{eqnarray*}

\end{enumerate}
This concludes the proof of the proposition.
\end{proof}
~

\begin{proof}
[Proof of Theorem 1.1] Let $u_{0}\in\mathcal{F}L^{s,p}$ and $a(n)=\widehat{u_{0}}(n)$.
By the previous proposition, \[
\left\Vert S_{T}((a_{v})_{v\in T^{\infty}})\right\Vert _{l^{s,p}}\leq C^{\left|T^{0}\right|}t^{\left|T^{0}\right|/2}\prod_{v\in T^{\infty}}\left\Vert a_{v}\right\Vert _{l^{s,p}}.\]
 Hence, the sum \begin{eqnarray}
 &  & \left\Vert a(n,0)+\sum_{T}\sum_{j\in\mathcal{J}(T),j(T)=n}\prod_{u\in T^{0}}j_{u}\prod_{v\in T^{\infty}}a(j_{v},0)\int_{\mathcal{R}(T,t)}c(j,s)ds\right\Vert _{l^{s,p}}\leq\nonumber \\
 &  & \sum_{T}\left\Vert S_{T}(a,\ldots,a)\right\Vert _{l^{s,p}}\leq\sum_{k=0}^{\infty}(Ct)^{k/2}\left\Vert a\right\Vert _{l^{s,p}}^{2k+1}=\frac{\left\Vert u_{0}\right\Vert _{\mathcal{F}L^{s,p}}}{1-\sqrt{Ct}\left\Vert u_{0}\right\Vert _{\mathcal{F}L^{s,p}}^{2}}.\label{eq:converge}\end{eqnarray}
converges for all $t\lesssim\min\{1,\left\Vert u_{0}\right\Vert _{\mathcal{F}L^{s,p}}^{-4}\}$.
Let $a(n,t)$ denote this sum, and define the solution map $u=Wu_{0}$
by $\widehat{u}(n,t)=e^{-in^{3}t}a(n,t)$. It follows from (\ref{eq:converge})
that $W$ is uniformly continuous. It remains to show that $W$ extends
the solution maps for smooth initial data.

From the definition of $S_{T}$, it is clear that $a(n,t)$ satisfies
the equation (\ref{eq: a sum integral}). Let $u_{N}(0)=\left(\chi_{[-N,N]}\widehat{u_{0}}\right)^{\vee}$
and $u_{N}=W(u_{N}(0))$. As $\left\Vert u_{N}(\cdot,0)\right\Vert _{\mathcal{F}L^{s,p}}\leq\left\Vert u(\cdot,0)\right\Vert _{\mathcal{F}L^{s,p}}$,
$u_{N}$ is defined on the interval where $u$ is defined, and $u_{N}\rightarrow u$
in $C([0,T],\mathcal{F}L^{s,p})$. Since $\widehat{u}_{N}(\cdot,0)$
is compactly supported, $u_{N}\in C([0,T_{0}],\mathcal{F}L^{\sigma,p})\subset C([0,T_{0}],\mathcal{F}L^{1})$
for some large $\sigma$. Here, $T_{0}$ depends on $\sigma$ and
$N$. Thus, if $t\leq T_{0}$, in (\ref{eq: a sum integral}) we can
exchange the order of the sum and the integral, therefore $u_{N}$
satisfies (\ref{eq: a integral sum}). Thus, $u_{N}$ is a classical
solution of (\ref{eq:modified mKdV}). Using the bound (\ref{eq:converge}),
we can repeat the argument on the interval $[T_{0},2T_{0}]$, etc.,
and show that $u_{N}$ is a classical solution on an interval $[0,T_{1}]$
where $T_{1}$ depends on $\left\Vert u_{0}\right\Vert _{\mathcal{F}L^{s,p}}$
only. Thus $u$ is the limit in $C([0,T_{1}],\mathcal{F}L^{s,p})$
of smooth solutions $u_{N}$.
\end{proof}
The proof of Proposition 1.2 is basically the same as that of Proposition
1.4 in \cite{MR2333210}, hence we obmit it.\\

\bibliographystyle{amsplain}
\bibliography{KdV}

\noun{\footnotesize ~}{\footnotesize \par}

\noun{\footnotesize Department of Mathematics, University of Chicago,
5734 S. University Ave., Chicago, IL 60637, USA}{\footnotesize \par}

\textit{\footnotesize E-mail address}: \texttt{\footnotesize tu@math.uchicago.edu}
\end{document}